\newtheorem{theorem}{Theorem}
\begin{document}

\title{AF-Embeddings of Graph Algebras}
\author{Christopher Schafhauser}
\date{\relax}
\maketitle

\abstract{Let $E$ be a countable directed graph.  We show that $C^*(E)$ is AF-embeddable if and only if no loop in $E$ has an entrance.  The proof is constructive and is in the same spirit as the Drinen-Tomforde desingularization in \cite{DrinenTomforde}.}

\section*{Introduction}

In \cite{PimsnerVoiculescu}, Pimnser and Voiculescu argued the irrational rotation algebras $A_\theta$ can be embedded into an AF $C^*$-algebra.  Since then, there has been an interest in characterizing the $C^*$-algebras which are AF-embeddable; especially crossed products.  Pimnser \cite{PimsnerAFE} and Brown \cite{BrownAFE}, repsectively, have solved the AF-embeddability question for algebras of the form $C(X) \rtimes \mathbb{Z}$ for a compact metric space $X$ and $A \rtimes \mathbb{Z}$ for an AF-algebra $A$.   See \cite[Chapter 8]{BrownOzawa} for a survey on AF-embeddability.

The general AF-embeddability problem is still largely unsolved.  There are only two known obstructions to AF-embeddability; namely exactness and quasidiagonality.  A $C^*$-algebra $A$ is said to be \emph{exact}, if the functor $B \mapsto A \otimes_{\text{min}} B$ preserves short exact sequences.  A $C^*$-algebra is called \emph{quasidiagonal} if there are sequences of finite dimensional $C^*$-algebras $F_n$ and completely positive contractive maps $\varphi_n: A \rightarrow F_n$ such that
\[ \left\|\varphi_n(ab) - \varphi_n(a) \varphi_n(b)\right\| \rightarrow 0 \qquad \text{and} \qquad \left\|\varphi_n(a)\right\| \rightarrow \|a\| \]
for every $a, b \in A$.  See \cite[Chapters 3 and 7]{BrownOzawa} for an introduction to exactness and quasidiagonality.

Both quasidiagonality and exactness are preserved by taking subalgebras and AF-algebras enjoy both properties.  Hence every AF-embeddable $C^*$-algebra is exact and quasidiagonal.  It is conjectured in \cite{BlackadarKirchberg} that the converse is true.  Blackadar and Kirchberg also ask if every stably finite nuclear $C^*$-algebra is quasidiagonal.  Hence in particular, the conjecture is that stable finiteness, quasidiagonality, and AF-embeddability are equivalent for nuclear $C^*$-algebras.  The main result of this paper verifies this conjecture for graph $C^*$-algebras.  In particular, we have

\begin{theorem}\label{MainTheorem}
For a countable graph $E$, the following are equivalent:
\begin{enumerate}
  \item $C^*(E)$ is AF-embeddable;
  \item $C^*(E)$ is quasidiagonal;
  \item $C^*(E)$ is stably finite;
  \item $C^*(E)$ is finite;
  \item No loop in $E$ has an entrance.
\end{enumerate}
\end{theorem}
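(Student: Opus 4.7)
The plan is to establish the cycle $(1)\Rightarrow(2)\Rightarrow(3)\Rightarrow(4)\Rightarrow(5)\Rightarrow(1)$. Four of the five implications rest on general $C^*$-machinery; only $(5)\Rightarrow(1)$ carries real content.

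For $(1)\Rightarrow(2)$, subalgebras of AF algebras are quasidiagonal. For $(2)\Rightarrow(3)$, one uses the classical fact that a quasidiagonal $C^*$-algebra admits a faithful tracial state on each matrix amplification, forcing stable finiteness. The implication $(3)\Rightarrow(4)$ is immediate from the definitions. For $(4)\Rightarrow(5)$ I would argue by contraposition: if a loop $\alpha$ based at a vertex $v$ has an entrance $e$, the Cuntz--Krieger relation at $v$ decomposes $P_v$ as an orthogonal sum whose summands include $S_\alpha S_\alpha^*$ and $S_e S_e^*$, while $S_\alpha^*S_\alpha = P_v$; hence $S_\alpha$ is a non-unitary isometry in the corner $P_v C^*(E) P_v$, making $C^*(E)$ infinite.

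The substantive implication is $(5)\Rightarrow(1)$, and, as advertised in the abstract, the plan is constructive in the spirit of Drinen--Tomforde. I would build a countable directed graph $F$ with no loops, together with an injective $*$-homomorphism $C^*(E)\hookrightarrow C^*(F)$; since the graph $C^*$-algebra of a loopless graph is AF (Kumjian--Pask--Raeburn--Renault), this yields the desired AF-embedding. The graph $F$ arises by unfolding every loop of $E$: for a loop $L=v_1\to v_2\to\cdots\to v_n\to v_1$, introduce level copies $v_i^{(k)}$ for $k\geq 0$, let the cycle edges climb a staircase $v_i^{(k)}\to v_{i+1}^{(k)}$ with the closing edge jumping to level $k+1$, and replicate each exit edge of a loop vertex at every level. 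Vertices and edges disjoint from loops are preserved verbatim; the hypothesis that no loop has an entrance means no external edge must be rerouted into the interior of a staircase, so $F$ is loopless by construction.

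The embedding $\iota:C^*(E)\to C^*(F)$ is defined on generators by sending each loop-vertex projection to the sum of its level copies, each cycle-edge partial isometry to the sum of its staircase counterparts, and everything off the loops to itself. The verification splits into three steps: (i) realize the formal sums as genuine elements of $C^*(F)$, or at worst of a controlled AF subalgebra of $M(C^*(F))$ into which $\iota$ lands; (ii) check the Cuntz--Krieger $E$-family relations for the images so that the universal property yields the homomorphism; and (iii) prove injectivity, via the gauge-invariant uniqueness theorem, using the natural $\mathbb{T}$-action on $C^*(F)$ that rescales the staircase edges to match the gauge action on $C^*(E)$, whereupon only non-vanishing of vertex projections remains to verify. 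The main obstacle will be (i), organizing the target so that the infinite staircase sums live in an honestly AF algebra; this is exactly where the no-entrance hypothesis pays off, keeping each staircase isolated from the others and from the rest of $F$, so that the target can be assembled block by block.
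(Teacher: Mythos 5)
Your chain $(1)\Rightarrow(2)\Rightarrow(3)\Rightarrow(4)\Rightarrow(5)$ is fine and essentially matches the paper (though for $(2)\Rightarrow(3)$ the standard argument is that a proper isometry cannot be approximately compressed to finite-dimensional algebras, not the existence of faithful traces; in any case these implications are cited facts). The problem is $(5)\Rightarrow(1)$, where the unfolding construction has a genuine gap that I do not believe can be repaired in the form you describe. Test it on the simplest case: $E$ is one vertex $v$ with one loop edge $e$, so $C^*(E)\cong C(\mathbb{T})$ with $s_e$ a unitary. Your staircase $F$ is the half-infinite path $v^{(0)}\to v^{(1)}\to\cdots$, so $C^*(F)\cong\mathcal{K}$. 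The proposed image of $s_e$ is the sum of the staircase edges, a unilateral shift; but then $\tilde s_e\tilde s_e^{\,*}=\sum_{k\geq 1}p_{v^{(k)}}\neq\sum_{k\geq 0}p_{v^{(k)}}=\tilde p_v$, so the Cuntz--Krieger relation $p_v=\sum_{f\in r^{-1}(v)}s_fs_f^*$ fails at the bottom of the staircase and the universal property gives you no homomorphism at all. If instead you let the levels range over $\mathbb{Z}$ to kill the bottom vertex, the relations do hold, but the images are infinite sums living only in $M(C^*(F))=B(\ell^2(\mathbb{Z}))$, which is not AF; your step (i) --- finding an AF subalgebra of the multiplier algebra containing the bilateral shift --- is then precisely the assertion that $C(\mathbb{T})$ is AF-embeddable, i.e.\ the Pimsner--Voiculescu-type fact that is the entire content of the theorem in this case. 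Note also that $C(\mathbb{T})$, being unital and infinite-dimensional, cannot embed into $C^*(F)\cong\mathcal{K}$ itself (its unit would land in a finite-rank corner), so no amount of care about convergence will place the image inside $C^*(F)$.

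The paper's construction supplies exactly the ingredient your plan is missing. It fixes a unital AF algebra $A$ containing a unitary $t$ with $\sigma(t)=\mathbb{T}$, takes a Bratteli diagram $B$ for $A$ with sink $w$, deletes the loop edges $e_1,\dots,e_n$, and adds edges $f_i$ from $w$ to the loop vertices $u_i$. Setting $\tilde s_{e_i}=s_{f_{i+1}}t\,s_{f_i}^*$ makes the Cuntz--Krieger relations hold exactly (the no-entrance hypothesis gives $r^{-1}(u_i)=\{f_i\}$), and the image of the full loop is $s_{f_1}t^ns_{f_1}^*$, whose spectrum contains $\mathbb{T}$. Injectivity is then obtained from Szyma\'{n}ski's general Cuntz--Krieger uniqueness theorem rather than gauge invariance; this matters because $\tilde s_{e_i}$ has degree zero for the canonical gauge action on $C^*(F)$, so that action does not restrict to the gauge action on the copy of $C^*(E)$. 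The unitary of full spectrum inside a unital AF algebra is the essential input; without it, or some equivalent device, the staircase picture cannot produce the embedding.
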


\section*{Graph $C^*$-Algebras}

By a graph we mean a quadruple $E = (E^0, E^1, r, s)$, where $E^0$ and $E^1$ are countable sets called the $\emph{vertices}$ and $\emph{edges}$ of $E$, and $r, s: E^1 \rightarrow E^0$ are functions called the \emph{range} and \emph{source} maps.  Given a graph $E$, a Cuntz-Krieger $E$-family in a $C^*$-algebra $A$ is a collection
\[ \{p_v, s_e : v \in E^0, e \in E^1\} \subseteq A \]
such that for all $v \in E^0$ and $e, f \in E^1$, we have
\begin{enumerate}
  \item $p_v^2 = p_v = p_v^*$ \quad for all $v \in E^0$
  \item $s_e^* s_f = \begin{cases} \, p_{s(e)} & e = f \\ \, 0 & e \neq f \end{cases}$
  \item $\displaystyle p_v = \sum_{e \in r^{-1}(v)} s_e s_e^*$ \quad if $0 < |r^{-1}(v)| < \infty$.
\end{enumerate}
Let $C^*(E)$ denote the universal $C^*$-algebra generated by a Cuntz-Krieger $E$-family.  See \cite{Raeburn} for an introduction to graph $C^*$-algebras.

If $E$ is a graph and $n \geq 1$, a path in $E$ is a list of edges $\alpha = (\alpha_n, \ldots, \alpha_1)$ such that $r(\alpha_i) = s(\alpha_{i+1})$ for each $1 \leq i < n$.  Define $r(\alpha) = r(\alpha_n)$ and $s(\alpha) = s(\alpha_1)$.  Define $E^n$ to be the set of paths of length $n$ in $E$ and $E^* = \bigcup_{n=0}^\infty E^n$ the paths of finite length in $E$.  In particular, the vertices of $E$ are considered to be paths of length 0.  Given $\alpha = (\alpha_n, \ldots, \alpha_1)$, define $s_\alpha = s_{\alpha_n} \cdots s_{\alpha_1}$.  It can be shown that
\[ C^*(E) = \overline{\operatorname{span}} \{ s_\alpha s_\beta^* : \alpha, \beta \in E^* \text{ with } s(\alpha) = s(\beta) \}. \]

A loop in $E$ is a path $\alpha \in E^n$ with $n \geq 1$ such that $r(\alpha) = s(\alpha)$.  We say $\alpha$ is a \emph{simple loop} if $r(\alpha_i) \neq r(\alpha_j)$ for $i \neq j$.  We say $\alpha$ has an entrance if $|r^{-1}(r(\alpha_i))| > 1$ for some $i$.  The structure of the algebra $C^*(E)$ is closely related to the structure of the loops in $E$.  We will show in Theorem \ref{MainTheorem}, the AF-embeddability of $C^*(E)$ is also characterized by the loops in $E$.

We recall two results about graph $C^*$-algebras.  Theorem \ref{AFGraphAlgebra} is from Kumijan, Pask, and Raeburn in the row-finite case and Drinen and Tomforde in general (see \cite[Theorem 2.4]{KPR} and \cite[Corollary 2.13]{DrinenTomforde}). Theorem \ref{CKUnqiueness} is Szyma\'{n}ski's generalization of the Cuntz-Krieger Uniqueness Theorem (see \cite[Theorem 1.2]{Szymanski}).

\begin{theorem}\label{AFGraphAlgebra}
For a countable graph $E$, $C^*(E)$ is AF if and only if $E$ has no loops.
\end{theorem}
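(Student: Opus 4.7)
The plan is to handle the two directions of the equivalence separately.

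For necessity ($\Rightarrow$), I would assume $C^*(E)$ is AF and derive a contradiction from the presence of a loop $\alpha = (\alpha_n, \ldots, \alpha_1)$. Since $r(\alpha) = s(\alpha)$, we have $s_\alpha^* s_\alpha = p_{s(\alpha)}$, so $s_\alpha$ is an isometry in the corner $p_{s(\alpha)} C^*(E) p_{s(\alpha)}$. I would split into two cases. If $\alpha$ has an entrance, i.e., there is an edge $e \neq \alpha_i$ with $r(e) = r(\alpha_i)$ for some $i$, then Cuntz-Krieger relation (3) at that vertex forces $s_\alpha s_\alpha^*$ to be a proper subprojection of $p_{s(\alpha)}$, witnessing infiniteness of $p_{s(\alpha)}$ and contradicting stable finiteness of AF algebras. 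If $\alpha$ has no entrance, iterating relation (3) along the loop gives $s_\alpha s_\alpha^* = p_{s(\alpha)}$, so $s_\alpha$ is a unitary in the corner; a gauge-invariant representation (or Szyma\'{n}ski's uniqueness theorem) shows the spectrum of $s_\alpha$ is all of $\mathbb{T}$, producing an embedding $C(\mathbb{T}) \hookrightarrow p_{s(\alpha)} C^*(E) p_{s(\alpha)}$ and contradicting $K_1 = 0$ for (hereditary subalgebras of) AF algebras.

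For sufficiency ($\Leftarrow$), assume $E$ has no loops and first treat the row-finite case. Write $E^0$ as an increasing union of finite subsets $V_1 \subseteq V_2 \subseteq \cdots$, chosen so that each $V_k$ is closed under the relevant predecessor operation for paths of length at most $k$. For each $k$, let $A_k \subseteq C^*(E)$ be the subalgebra spanned by elements $s_\alpha s_\beta^*$ with $\alpha, \beta$ paths of length at most $k$ and endpoints in $V_k$. Row-finiteness together with absence of loops implies only finitely many such paths exist, so each $A_k$ is finite-dimensional, and the products $(s_\alpha s_\beta^*)(s_\gamma s_\delta^*)$ behave as matrix units within $A_k$. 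The Cuntz-Krieger relations at vertices of $V_k$ rewrite generators of $A_k$ inside $A_{k+1}$, giving $A_k \subseteq A_{k+1}$; density of $\bigcup_k A_k$ in $C^*(E)$ follows from the standard dense span of $C^*(E)$ by $\{s_\alpha s_\beta^*\}$. Hence $C^*(E)$ is an AF algebra.

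For a general (not necessarily row-finite) $E$, I would invoke the Drinen-Tomforde desingularization to produce a row-finite graph $F$ with $C^*(E)$ stably isomorphic to $C^*(F)$. The desingularization only attaches tails at infinite emitters and breakpoints, so it introduces no new loops; hence $F$ is row-finite and loop-free, and the row-finite case gives $C^*(F)$ AF. Preservation of AF-ness under stable isomorphism (equivalently, Morita equivalence of separable $C^*$-algebras) then yields that $C^*(E)$ is AF. I expect the main obstacle to be the row-finite case, specifically verifying that the $A_k$ genuinely form an increasing filtration with dense union---the delicate point is organizing the $V_k$ so that each generator of $A_k$ is rewritten within $A_{k+1}$ via the CK relations. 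The no-loops hypothesis is essential precisely here: a loop would force either a copy of $C(\mathbb{T})$ or an infinite projection inside a corner, obstructing any finite-dimensional approximation.
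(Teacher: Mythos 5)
The paper does not actually prove this theorem; it quotes it from \cite{KPR} and \cite{DrinenTomforde}, so your proposal can only be measured against the standard arguments. Your overall architecture (two cases for necessity; a finite-dimensional filtration plus desingularization for sufficiency) is the standard one, but there is a genuine gap in the necessity direction, in the entrance-less case. You conclude by ``producing an embedding $C(\mathbb{T}) \hookrightarrow p_{s(\alpha)}C^*(E)p_{s(\alpha)}$ and contradicting $K_1 = 0$.'' That inference is false: an injective $*$-homomorphism need not be injective on $K_1$, so a copy of $C(\mathbb{T})$ inside an algebra with vanishing $K_1$ is no contradiction at all. Indeed, the main theorem of this very paper embeds $C(\mathbb{T})$ (the graph algebra of a single entrance-less loop) into an AF algebra, so the mere existence of such an embedding cannot obstruct AF-ness. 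The correct argument is that when the loop $\alpha$ based at $v = s(\alpha)$ has no entrance, every path with range $v$ is a terminal segment of a power of $\alpha$, and a short computation then shows $p_v C^*(E) p_v = C^*(s_\alpha) \cong C(\mathbb{T})$ \emph{exactly} (the spectrum being all of $\mathbb{T}$ by gauge invariance, as you say). Since hereditary subalgebras of AF algebras are AF and $C(\mathbb{T})$ is not AF (connected spectrum; equivalently $K_1(C(\mathbb{T})) = \mathbb{Z}$), this yields the contradiction. The point is equality of the corner with $C(\mathbb{T})$, not a mere embedding.

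The sufficiency direction is essentially right but has a technical slip: the linear span of $\{s_\alpha s_\beta^* : |\alpha|, |\beta| \leq k, \ \text{endpoints in } V_k\}$ is not closed under multiplication, since a product such as $(s_\alpha p_w)(s_\gamma p_u) = s_{\alpha\gamma}$ can involve a path of length up to $2k$; so your $A_k$ is not a subalgebra as defined. The standard fix is to filter by finite subgraphs rather than by path length: for an increasing sequence of finite subgraphs exhausting $E$, the span of all $s_\mu s_\nu^*$ with $\mu, \nu$ paths lying entirely in the $k$-th subgraph is closed under products and adjoints, and is finite-dimensional precisely because the absence of loops bounds the number (and length) of paths in a finite graph. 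With this indexing the inclusions $A_k \subseteq A_{k+1}$ are automatic and no rewriting via relation (3) is needed, contrary to the ``delicate point'' you anticipate. Your treatment of the non-row-finite case via Drinen--Tomforde desingularization is fine: it introduces no loops, and $C^*(E)$ sits as a full corner of the resulting row-finite graph algebra, so AF-ness passes down.
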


\begin{theorem}\label{CKUnqiueness}
Suppose $E$ is a graph, $A$ is a $C^*$-algebra, and $\{\tilde{p}_v, \tilde{s}_e\} \subseteq A$ is a Cuntz-Kreiger $E$-family.  If $\tilde{p}_v \neq 0$ for every $v \in E^0$ and $\sigma(\tilde{s}_\alpha) \supseteq \mathbb{T}$ for every entry-less loop $\alpha \in E^*$, then the induced morphism $C^*(E) \rightarrow A$ defined by $p_v \mapsto \tilde{p}_v$ and $s_e \mapsto \tilde{s}_e$ is injective.
\end{theorem}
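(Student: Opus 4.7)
The strategy is to use the gauge action on $C^*(E)$ to reduce faithfulness of the induced morphism $\pi:C^*(E)\to A$ to injectivity on the fixed-point subalgebra, and then to use the spectral hypothesis to compensate on entry-less loops. Let $\gamma:\mathbb{T}\to\mathrm{Aut}(C^*(E))$ be the gauge action determined on generators by $\gamma_z(p_v)=p_v$ and $\gamma_z(s_e)=z\,s_e$; it is well-defined by universality and strongly continuous. Set $\mathcal{F}:=C^*(E)^\gamma$, and let $\Phi(x)=\int_{\mathbb{T}}\gamma_z(x)\,dz$ be the associated faithful conditional expectation onto $\mathcal{F}$.

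First I would verify that $\pi|_{\mathcal{F}}$ is injective. The core $\mathcal{F}$ equals the closed linear span of $\{s_\alpha s_\beta^*:|\alpha|=|\beta|,\ s(\alpha)=s(\beta)\}$ and admits a filtration by subalgebras $\mathcal{F}_n$, each a direct sum of matrix blocks whose minimal diagonal projections are dominated by vertex projections $p_v$. Since $\tilde p_v\neq 0$ for every $v$, no block is killed by $\pi$, hence $\pi$ is injective on each $\mathcal{F}_n$ and therefore on $\mathcal{F}$.

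Next, I would reduce injectivity of $\pi$ on all of $C^*(E)$ to the norm estimate $\|\pi(\Phi(a))\|\leq\|\pi(a)\|$ for positive $a$: given this, $\pi(x)=0$ forces $\pi(\Phi(x^*x))=0$, hence $\Phi(x^*x)=0$ by the previous step, hence $x=0$ by faithfulness of $\Phi$. To establish the estimate on a spanning element $x=\sum_k x_k$ with $\gamma_z(x_k)=z^k x_k$ (so $\Phi(x)=x_0$), the classical \emph{shift trick} applies whenever one can find long paths $\mu$ such that compression by $s_\mu$ redistributes the nonzero-degree components $x_k$ ($k\neq 0$) into mutually orthogonal corners disjoint from the compression of $x_0$; this is possible at every vertex that lies on a loop with an entrance, or off any loop entirely.

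The main obstacle is handling entry-less loops, where no alternative paths exist and the shift trick breaks down. This is precisely where the hypothesis $\sigma(\tilde s_\alpha)\supseteq\mathbb{T}$ is used. On an entry-less loop $\alpha$, the element $s_\alpha$ is a unitary in its corner $p_{s(\alpha)}C^*(E)p_{s(\alpha)}$, and the $C^*$-subalgebra it generates together with the loop's vertex projections is isomorphic to $C(\mathbb{T})\otimes M_{|\alpha|}$. The image $\tilde s_\alpha$ is automatically a unitary in $\tilde p_{s(\alpha)}A\tilde p_{s(\alpha)}$, so $\sigma(\tilde s_\alpha)\subseteq\mathbb{T}$; combined with the hypothesis, $\sigma(\tilde s_\alpha)=\mathbb{T}$, and continuous functional calculus yields an isometric restriction of $\pi$ to this $C(\mathbb{T})$-corner. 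The technical heart of the proof is to splice these two local analyses: isolate each entry-less loop into its own hereditary subalgebra (where functional calculus on $\tilde s_\alpha$ supplies the required estimate directly), apply the shift trick on the complement, and assemble the pieces into a global estimate $\|\pi(\Phi(x))\|\leq\|\pi(x)\|$. Once this splicing is carried out, injectivity of $\pi$ follows immediately from the preceding steps.
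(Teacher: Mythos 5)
First, a point of comparison: the paper does not prove this statement at all --- it quotes it as Szyma\'{n}ski's theorem (\cite[Theorem 1.2]{Szymanski}) --- so your proposal can only be judged as a standalone argument. Its skeleton is the standard one and the reductions in it are valid: the gauge action and faithful expectation $\Phi$ onto the core, injectivity of $\pi$ on the core from $\tilde{p}_v \neq 0$ (modulo the usual care with infinite receivers, where the minimal projections also include differences $s_\mu(p_v - \sum_{e \in F} s_e s_e^*)s_\mu^*$, which must also be shown to survive --- they do), and the implication ``norm estimate $\|\pi(\Phi(a))\| \leq \|\pi(a)\|$ plus core injectivity plus faithfulness of $\Phi$ gives injectivity.'' Your use of the spectral hypothesis is also correct: no entrance forces $\tilde{s}_\alpha \tilde{s}_\alpha^* = \tilde{p}_{s(\alpha)}$, so $\tilde{s}_\alpha$ is unitary in the corner $\tilde{p}_{s(\alpha)} A \tilde{p}_{s(\alpha)}$ with spectrum exactly $\mathbb{T}$ there, and $\pi$ is isometric on $p_{s(\alpha)}C^*(E)p_{s(\alpha)} \cong C(\mathbb{T})$.

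The genuine gap is the step you yourself flag as ``the technical heart'': the splicing is not an assembly problem that can be deferred, because your two local regimes do not cover $C^*(E)$, and in fact your claim that the shift trick works ``at every vertex that lies on a loop with an entrance, or off any loop entirely'' is false. Concretely, let $\alpha$ be an entry-less loop at $v$, let $\beta$ be a path with $s(\beta) = v$ whose range $w$ lies on no loop (such exits are allowed: ``no entrance'' constrains $r^{-1}$, not $s^{-1}$), and consider $a = s_\beta f(s_\alpha) s_\beta^*$ for a trigonometric polynomial $f$. This element sits in the corner at the vertex $w$, which is off every loop, yet no compression can separate its Fourier components: since $s_\alpha s_\alpha^* = p_v$, every projection $s_\mu s_\mu^*$ with $r(\mu) = w$ either kills $a$ outright or satisfies $s_\mu s_\mu^* \, a \, s_\mu s_\mu^* = a$, so the nonzero-degree parts can never be pushed into corners orthogonal to the compression of $\Phi(a) = \widehat{f}(0)\, s_\beta s_\beta^*$. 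The estimate for such $a$ can only come from the spectral hypothesis transported along $\beta$; that is, the windings around entry-less loops propagate along every path leaving them, so the difficulty is not localized to the loop corners, and a general element mixes these windings with terms of the other kind. Your proposal supplies no mechanism for this interaction, and that mechanism is precisely the content of the theorem. One standard way to actually close the gap is ideal-theoretic rather than via a global norm estimate: prove that every nonzero ideal of $C^*(E)$ either contains a vertex projection or intersects $C^*(s_\alpha) \cong C(\mathbb{T})$ in a nonzero ideal for some entry-less loop $\alpha$; then $\ker \pi \neq 0$ is impossible, since vertex projections survive by hypothesis, and a nonzero $f(s_\alpha) \in \ker\pi$ would force $f(\tilde{s}_\alpha) = 0$ with $f \neq 0$, contradicting $\sigma(\tilde{s}_\alpha) = \mathbb{T}$. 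Until such a lemma (or a genuine splicing estimate) is proved, what you have is an accurate map of the difficulty, not a proof.
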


\section*{Proof of Theorem \ref{MainTheorem}}

We are now ready to prove our main result.  Starting with a graph $E$ satisfying condition (5), we will replace each loop in $E$ with the Bratteli diagram of an AF-algebra to build a new graph $F$ such that $C^*(F)$ is AF and $C^*(E) \subseteq C^*(F)$.  The idea of the proof is motivated by the Drinen-Tomforde desingularization process introduced in \cite{DrinenTomforde}.

\begin{proof}[Proof of Theorem \ref{MainTheorem}]
It is well-known that (1) implies (2) and (2) implies (3) (see \cite[Propositions 7.1.9, 7.1.10, and 7.1.15]{BrownOzawa}) and it is obvious that (3) implies (4).  To see (4) implies (5), note that if $\alpha, \beta \in E^*$ are distinct paths with $s(\alpha) = r(\alpha) = r(\beta)$, then we have
\[ s_\alpha^* s_\alpha = p_{s(\alpha)} \quad \text{and} \quad s_\alpha s_\alpha^* \lneqq s_\alpha s_\alpha^* + s_\beta s_\beta^* \leq p_{s(\alpha)}. \]
So $p_{s(\alpha)}$ is an infinite projection and $C^*(E)$ is infinite.

Now suppose (5) holds.  Choose a unital AF-algebra $A$ such that there is a unitary $t \in A$ with $\sigma(t) = \mathbb{T}$ and let $B$ be a Bratteli diagram for $A$ with sink $v$.  Let $e_n \cdots e_2 e_1$ be a simple loop in $E$ and set $u_i = s(e_i)$.  Define a graph $F$ by
\[ F^0 = E^0 \cup B^0, \qquad F^1 = \left(E^1 \setminus \{e_1, \ldots, e_n \} \right) \cup B^1 \cup \{f_1, \ldots f_n\} \]
and extend the range and source maps by $r(f_i) = u_i$ and $s(f_i) = v$.  For example, if $A = M_{2^\infty}$, and $E$ and $B$ the graphs

\begin{center}
\begin{tikzcd}[row sep = small]
&&&&&&& \\
u_1 \arrow{r}{e_1} & u_2 \arrow{dd}{e_2} &&&&&& \\
& & & v & \bullet \arrow[bend left]{l}\arrow[bend right]{l} & \bullet \arrow[bend left]{l}\arrow[bend right]{l} & \bullet \arrow[bend left]{l}\arrow[bend right]{l} & \cdots \arrow[bend left]{l}\arrow[bend right]{l} \\
u_4 \arrow{uu}{e_4} & u_3 \arrow{l}{e_3} &&&&&& \\
&&&&&&&
\end{tikzcd}
\end{center}

\noindent then $F$ is the graph given below:

\begin{center}
\begin{tikzcd}[row sep = small]
&&&&&&&\\
 u_1  & & & &  &  &  & \\ \\
 u_2  & & & &  &  &  & \\
 & &  v \arrow[bend right]{lluuu}[swap]{f_1} \arrow{llu}[swap]{f_2} \arrow{lld}{f_3} \arrow[bend left]{llddd}{f_4} & \bullet \arrow[bend left]{l}\arrow[bend right]{l} & \bullet \arrow[bend left]{l}\arrow[bend right]{l} & \bullet \arrow[bend left]{l}\arrow[bend right]{l} & \cdots \arrow[bend left]{l}\arrow[bend right]{l} \\
 u_3  & & & &  &  &  & \\ \\
 u_4  & & & &  &  &  & \\
&&&&&&&
\end{tikzcd}
\end{center}

Note that $p_vC^*(F)p_v \cong A$ and hence we may view $t$ as an element of $C^*(F)$.  Define $\tilde{s}_{e_i} = s_{f_{i+1}} t s_{f_i}^* \in C^*(F)$ for each $i = 1, \ldots, n$.  Since no loop in $E$ has an entrance, we have $r_F^{-1}(u_i) = \{f_i\}$.  Hence
\[ \tilde{s}_{e_i}^* \tilde{s}_{e_i} = s_{f_i} s_{f_i}^* = p_{u_i} \quad \text{and} \quad \tilde{s}_{e_i} \tilde{s}_{e_i}^* = s_{f_{i+1}} s_{f_{i+1}}^* = p_{u_{i+1}}. \]
Moreover,
\[ \sigma(\tilde{s}_{e_n} \tilde{s}_{e_{n-1}} \cdots \tilde{s}_{e_1}) = \sigma(s_{f_1} t^n s_{f_1}^*) = \sigma(s_{f_1}^*s_{f_1} t^n) = \sigma(t^n) = \mathbb{T} \cup \{0\}. \]
Now, by Theorem \ref{CKUnqiueness}, there is an inclusion $C^*(E) \hookrightarrow C^*(F)$ given by
\[ p_v \mapsto p_v \text{ for } v \in E^0 \quad \text{and} \quad s_e \mapsto \begin{cases} \tilde{s}_e & e \in \{e_1, \ldots, e_n\}, \\ s_e & e \in E^1 \setminus \{e_1, \ldots, e_n\}. \end{cases} \]

Note that since no loop in $E$ has an entrance, the loops in the graph $E$ are disjoint.  Thus by applying the construction above to every loop in $E$, we may build a graph $F$ with no loops and an embedding $C^*(E) \hookrightarrow C^*(F)$.  Since $F$ has no loops, $C^*(F)$ is AF by Theorem \ref{AFGraphAlgebra} and hence $C^*(E)$ is AF-embeddable.
\end{proof}

\noindent Dept.\ of Mathematics, University of Nebraska-Lincoln, Lincoln, NE, 68588-0130 \\
\emph{E-mail address:} cschafhauser2@math.unl.edu

\end{document}